\documentclass[12pt,reqno]{amsart}

\usepackage{amssymb}
\textheight22.8cm \textwidth15.6cm \hoffset-1.7cm \voffset-.5cm

\newtheorem{theorem}{Theorem}
\newtheorem{proposition}[theorem]{Proposition}
\newtheorem{lemma}[theorem]{Lemma}

\theoremstyle{remark}

\numberwithin{equation}{section}

\def\ep{\varepsilon}

\def\rk{\operatorname{rk}}
\def\Cat{\operatorname{Cat}}
\def\({\left(}
\def\){\right)}

\begin{document}

\title[Euler characteristic of generalized noncrossing partitions]{Euler characteristic of the truncated order complex of generalized noncrossing partitions}

\author[D. Armstrong and C. Krattenthaler]{D. Armstrong$^\dagger$ and C. Krattenthaler$^\ddagger$}
\address{School of Mathematics, University of Minnesota, Minneapolis, Minnesota 55455, USA. WWW: \tt http://www.math.umn.edu/\~{}armstron.}
\address{Fakult\"at f\"ur Mathematik, Universit\"at Wien, Nordbergstra{\ss}e~15, A-1090 Vienna, Austria. WWW: \tt http://www.mat.univie.ac.at/\~{}kratt.}

\thanks{$^\dagger$Research partially supported by the Austrian
Science Foundation FWF, grants Z130-N13 and S9607-N13,
the latter in the framework of the National Research Network
``Analytic Combinatorics and Probabilistic Number Theory"}

\subjclass [2000]{Primary 05E15; Secondary 05A10 05A15 05A18 06A07 20F55}

\keywords {root systems, reflection groups, Coxeter groups, generalized non-crossing partitions, chain enumeration, Euler characteristics, Chu--Vandermonde summation}

\begin{abstract}
The purpose of this note is to complete the study, begun in the first author's PhD thesis, of the topology of the poset of generalized noncrossing partitions
associated to real reflection groups. In particular, we calculate the Euler characteristic of this poset with the maximal and minimal elements deleted. 
As we show, the result on the Euler characteristic extends to generalized 
noncrossing partitions associated to well-generated complex reflection groups.
\end{abstract}

\maketitle

\section{Introduction}
\label{sec:1} 
We say that a partition of the set $[n]:=\{1,2,\ldots,n\}$ is {\sf noncrossing} if, whenever we have $\{a,c\}$ in block $A$ and $\{b,d\}$ in block $B$ of the partition with $a<b<c<d$, it follows that $A=B$. For an introduction to the rich history of this subject, see \cite[Chapter~4.1]{ArmDAA}. We say that a noncrossing partition of $[mn]$ is {\sf $m$-divisible} if each of its blocks has cardinality divisible by $m$. The collection of $k$-divisible noncrossing partitions of $[kn]$ --- which we will denote by $NC^{(m)}(n)$ --- forms a join-semilattice under the refinement partial order. This structure was first studied by Edelman in his PhD thesis; see \cite{Ede}.

Twenty-six year later, in his own PhD thesis \cite{ArmDAA}, the first author
defined a generalization of Edelman's poset to all finite 
real reflection
groups. (We refer the reader to
\cite{HumpAC} for all terminology related to 
real reflection groups.)
Let $W$ be a finite group generated by reflections in Euclidean space, and let $T\subseteq W$ denote the set of {\sf all} reflections in the group. Let $\ell_T:W\to\mathbb{Z}$ denote the word length in terms of the generators $T$. Now fix a Coxeter element $c\in W$ and a positive integer $m$. We define the set of {\sf $m$-divisible noncrossing partitions} as follows:
\begin{align} \notag
NC^{(m)}(W)=\Bigg\{ (w_0;w_1,\ldots,w_m)\in W^{m+1} &: w_0w_1\cdots w_m=c \quad\text{ and } \\ &\sum_{i=0}^m \ell_T(w_i)=\ell_T(c)\Bigg\}.
\label{eq:GNC}
\end{align}
That is, $NC^{(m)}(W)$ consists of the {\sf minimal factorizations} of $c$ into $m+1$ group elements. We define a partial order on $NC^{(m)}(W)$ by setting
\begin{multline} \label{eq:GNCord}
(w_0;w_1,\ldots,w_m) \leq (u_0;u_1,\ldots,u_m) 
\quad \text{if and only if}\quad 
\ell_T(u_i)+\ell_T(u_i^{-1}w_i) = \ell_T(w_i)\\ 
\text{for } 1\leq i\leq m.
\end{multline}
In other words, we set $(w_0;w_1,\ldots,w_m) \leq (u_0;u_1,\ldots,u_m)$ 
if for each $1\leq i\leq m$ the element $u_i$ lies on a
geodesic from the identity to $w_i$ in the Cayley graph $(W,T)$. 
We place no {\sf a priori} restriction on the elements $w_0$, $u_0$,
however it follows from the other conditions that
$\ell_T(w_0)+\ell_T(w_0^{-1}u_0)=\ell_T(u_0)$. We note that the poset is graded with rank function 
\begin{equation}\label{eq:rkm}
\rk (w_0;w_1,\ldots,w_m)= \ell_T(w_0),
\end{equation}
hence the element $(c;\varepsilon,\ldots,\varepsilon)\in W^{m+1}$ --- where
$\varepsilon\in W$ is the identity --- is the unique maximum element. There
is no unique minimum. It turns out that the isomorphism class of the poset $NC^{(m)}(W)$ is independent of the choice of Coxeter element
$c$. Furthermore, when $W$ is the symmetric group $\mathfrak{S}_n$ we recover
Edelman's poset $NC^{(m)}(n)$. 

In this note we are concerned with the {\sf order complex} $\Delta(NC^{(m)}(W))$ --- that is, with the abstract simplicial complex whose $d$-dimensional faces are the chains $\pi_0<\pi_1<\cdots < \pi_d$ in the poset $NC^{(m)}(W)$. In particular, we wish to  compute the Euler characteristic of this complex when the maximal and minimal elements of the poset have been deleted. The answer will involve the following quantity, called the {\sf positive Fu\ss--Catalan number}:
\begin{equation}\label{eq:redE}
\Cat^{(m)}_+(W):=\prod_{i=1}^n \frac{mh+d_i-2}{d_i}.
\end{equation}
Here, $n$ is the {\sf rank} of the group $W$ (the dimension of the
Euclidean space on which it acts), $h$ is the {\sf Coxeter number}
(the order of a Coxeter element), and the integers
$d_1,d_2,\ldots,d_n$ are the {\sf degrees} of $W$ (the degrees of the
fundamental $W$-invariant polynomials). Our main theorem is the
following, which settles Conjecture~3.7.9 from \cite{ArmDAA}.

\begin{theorem}\label{thm:1}
Let $W$ be a finite 
real
reflection group of rank $n$ and let $m$ be a positive integer. The order complex of the poset $NC^{(m)}(W)$ with maximal and minimal elements deleted has reduced Euler characteristic
\begin{equation} 
(-1)^n\Big(\Cat_+^{(m)}(W)-\Cat_+^{(m-1)}(W)\Big),
\end{equation}
and it is homotopy equivalent to a wedge of this many $(n-2)$-dimensional spheres.
\end{theorem}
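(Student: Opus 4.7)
\medskip
\noindent\textbf{Proof plan.} The plan is to compute the reduced Euler characteristic by chain enumeration, and then to invoke a shellability result to deduce the homotopy statement. Write $Q$ for the truncated poset obtained from $NC^{(m)}(W)$ by deleting the maximum $(c;\varepsilon,\ldots,\varepsilon)$ and all the minimal elements, and write $\Delta(Q)$ for its order complex. By Philip Hall's theorem, $\tilde\chi(\Delta(Q))$ equals the M\"obius function $\mu_{\widehat Q}(\hat0,\hat1)$ of the bounded extension $\widehat Q=Q\cup\{\hat0,\hat1\}$, equivalently the alternating sum $\sum_{k\ge 0}(-1)^k f_{k-1}$ of face numbers of $\Delta(Q)$. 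Since the rank function on $NC^{(m)}(W)$ is $\rk(w_0;w_1,\ldots,w_m)=\ell_T(w_0)$, a strict chain in $Q$ is simply a strict chain in $NC^{(m)}(W)$ all of whose elements have rank in $\{1,2,\ldots,n-1\}$.

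The next step is to import the explicit chain-enumeration formula for $NC^{(m)}(W)$ established in~\cite{ArmDAA}, which expresses the number of multichains (refined by rank profile) in $NC^{(m)}(W)$ as a product of Fu\ss--Catalan type $\prod_{i=1}^n(ah+d_i)/d_i$ for appropriate linear parameters $a$ depending on the rank jumps. Substituting these counts into the alternating sum that computes $\tilde\chi(\Delta(Q))$ and rearranging so that the binomial coefficients line up, one can apply the Chu--Vandermonde summation (flagged in the paper's keywords). The resulting cascade of cancellations collapses the full alternating sum to precisely two surviving products, which are then identified with $\Cat_+^{(m)}(W)$ and $\Cat_+^{(m-1)}(W)$ respectively. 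This gives the Euler characteristic $(-1)^n(\Cat_+^{(m)}(W)-\Cat_+^{(m-1)}(W))$.

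For the homotopy statement, the complex $\Delta(NC^{(m)}(W))$ with max and mins removed was shown to be shellable in~\cite{ArmDAA}. A pure shellable simplicial complex of top dimension $n-2$ is homotopy equivalent to a wedge of $(n-2)$-dimensional spheres, and the number of spheres is forced to equal $|\tilde\chi|$, so the value computed above directly determines the wedge count.

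The main obstacle is the Chu--Vandermonde step. The individual terms of the alternating sum have no standalone simplification, and matters are complicated by the inclusion--exclusion needed to restrict ranks to $\{1,\ldots,n-1\}$. The art lies in finding the correct reparameterization of the Fu\ss--Catalan product so that Chu--Vandermonde applies to exactly the right summation variable, leaving precisely two terms and with the right identification as $\Cat_+^{(m)}$ and $\Cat_+^{(m-1)}$; expressing $\Cat_+^{(m)}(W)$ as $\prod_i(mh+d_i-2)/d_i$ (rather than the shifted form $\prod_i(mh+d_i)/d_i$ of $\Cat^{(m)}(W)$) strongly suggests where the truncation of ranks enters and how the final shift $d_i\mapsto d_i-2$ emerges from the chain-restriction inclusion--exclusion.
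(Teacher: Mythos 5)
There is a genuine gap at the heart of your Euler-characteristic computation: you propose to ``import the explicit chain-enumeration formula \ldots refined by rank profile'' as a product $\prod_{i=1}^n(ah+d_i)/d_i$ with parameters depending on the rank jumps, but no such product formula for the rank-profile-refined counts $R_W(s_1,\dots,s_l)$ exists (and none is proved in the source you cite). The uniform product formulas available are only for the \emph{total} number of multichains of a given length, $\Cat^{(ml)}(W)$, and for special selections such as multichains whose bottom element has rank $0$, counted by $\Cat^{(ml-1)}(W)$. The paper's proof is designed precisely to avoid ever needing the refined counts: it writes the reduced Euler characteristic as an alternating sum of $R_W(s_1,\dots,s_l)$ over strictly positive rank jumps, then applies inclusion--exclusion over which $s_i$ vanish, together with the observation (Lemma~\ref{lem:2}) that zero entries may be suppressed except in the first position; this reduces everything to the two quantities above (Theorem~\ref{thm:3} and Lemma~\ref{lem:1}). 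Only then does Chu--Vandermonde enter, and even after that step one still needs the finite-difference identity $\sum_{k=0}^n(-1)^k\binom nk p(k)=(-1)^n n!\,p_n$ for polynomials $p$ of degree $n$ to produce the final cancellation leaving $\Cat^{(-m-1)}(W)-\Cat^{(-m)}(W)=(-1)^n\bigl(\Cat_+^{(m)}(W)-\Cat_+^{(m-1)}(W)\bigr)$; your sketch attributes the whole collapse to Chu--Vandermonde and leaves the ``art'' of the reparameterization unexecuted, so the central computation is asserted rather than proved. Your heuristic that the shift $d_i\mapsto d_i-2$ ``emerges from the chain-restriction inclusion--exclusion'' is also not how it arises: it comes from the duality $\Cat_+^{(m)}(W)=(-1)^n\Cat^{(-m-1)}(W)$, which rests on the fact that the numbers $h-d_i+2$ are a permutation of the degrees.

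A smaller inaccuracy: what is proved in the thesis is shellability of the poset $NC^{(m)}(W)$ itself (Cor.~3.7.3), not of the truncation; to conclude that the truncated poset is shellable you must additionally observe that it is a rank-selected subposet and invoke Bj\"orner's theorem that rank selection preserves shellability, after which the wedge-of-spheres statement with $|\tilde\chi|$ spheres follows as you say.
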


In Section \ref{sec:2} we will collect some auxiliary results and in Section \ref{sec:3} we will prove the main theorem.

In \cite{BesDAB,BeCoAA}, Bessis and Corran have shown that the notion of
noncrossing partitions extends rather straightforwardly to well-generated
complex reflection groups. It is not done explicitly in \cite{ArmDAA},
but from \cite{BesDAB,BeCoAA} it is obvious that the definition of generalized
noncrossing partitions in \cite{ArmDAA} can be extended without any effort to 
well-generated complex
reflection groups, the same being true for many (most?) of the results from
\cite{ArmDAA} (cf.\ \cite[Disclaimer~1.3.1]{ArmDAA}). In
Section~\ref{sec:complex}, we show that the assertion in
Theorem~\ref{thm:1} on the Euler
characteristic of the truncated order complex of generalized
noncrossing partitions continues to hold for well-generated
complex reflection groups. 
We suspect that this is also true for the topology part of
Theorem~\ref{thm:1}, but what is missing here is the extension 
to well-generated complex reflection groups of the
result of Hugh Thomas and the first author \cite[Cor.~3.7.3]{ArmDAA}
that the poset of generalized noncrossing partitions associated to real
reflection groups is shellable. This extension has so far not even been
done for \cite{AtBWAA}, the special case of the poset of noncrossing
partitions.

\section{Auxiliary results}
\label{sec:2}

In this section we record some results that are needed in the proof of the main theorem.  The first result is Theorem~3.5.3 from \cite{ArmDAA}.

\begin{theorem} \label{thm:2}
The cardinality of $NC^{(m)}(W)$ is given by the {\sf Fu\ss--Catalan number for reflection groups}
\begin{equation} \label{eq:FC}
\Cat^{(m)}(W):=\prod _{i=1} ^{n}\frac {mh+d_i} {d_i},
\end{equation}
where, as before, $n$ is the rank, $h$ is the Coxeter number and the $d_i$ are the degrees of $W$. Equivalently, given a Coxeter element $c$, the number of minimal decompositions 
\begin{equation*}
w_0w_1\cdots w_m=c\quad \text{with}\quad \ell_T(w_0)+\ell_T(w_1)+\dots+\ell_T(w_m)=\ell_T(c)
\end{equation*}
is given by $\Cat^{(m)}(W)$.
\end{theorem}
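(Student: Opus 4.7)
My plan is to reduce the count to the $m=1$ case via a bijection with multichains in $NC(W) = NC^{(1)}(W)$, and then to invoke the product formula for the zeta polynomial of the noncrossing partition lattice (Chapoton's Fu\ss--Catalan formula).

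For the bijection, I would send $(w_0;w_1,\ldots,w_m)\in NC^{(m)}(W)$ to the tuple $(\sigma_1,\sigma_2,\ldots,\sigma_m)$ defined by $\sigma_k:=w_0w_1\cdots w_{k-1}$. The minimal factorization hypothesis $\sum_{i=0}^m\ell_T(w_i)=\ell_T(c)$ is straightforwardly equivalent to the assertion that $\varepsilon\leq\sigma_1\leq\sigma_2\leq\cdots\leq\sigma_m\leq c$ is a multichain in the absolute order interval $NC(W)=[\varepsilon,c]$. Conversely, such a multichain determines the tuple via $w_0=\sigma_1$, $w_k=\sigma_k^{-1}\sigma_{k+1}$ for $1\leq k\leq m-1$, and $w_m=\sigma_m^{-1}c$; a short check on lengths and products shows the two constructions are mutually inverse. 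Hence $|NC^{(m)}(W)|$ equals the number of $m$-element multichains in $NC(W)$.

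It then remains to establish that this multichain count is $\prod_{i=1}^n(mh+d_i)/d_i$, which is Chapoton's Fu\ss--Catalan formula. This step is the main obstacle, as no uniform, case-free proof is presently known. The standard route uses the classification of finite real reflection groups: for $W=\mathfrak{S}_n$ of type $A_{n-1}$ one recovers Edelman's formula for $m$-divisible noncrossing partitions of $[mn]$; types $B_n$ and $D_n$ are handled by explicit combinatorial models in terms of signed (respectively type-$D$) noncrossing partitions; and the exceptional types $E_6$, $E_7$, $E_8$, $F_4$, $H_3$, $H_4$, together with the dihedral family $I_2(k)$, are dispatched by direct, often computer-assisted, verification. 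Since all Coxeter elements of $W$ are conjugate, the count is automatically independent of the choice of $c$, so the answer depends only on the invariants $n$, $h$, and $d_1,\ldots,d_n$ of $W$, which one reads off as the uniform product $\Cat^{(m)}(W)$.
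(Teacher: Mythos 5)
Your argument is correct, and it is essentially the argument behind the result as the paper uses it: the paper gives no proof of this statement, quoting it as Theorem~3.5.3 of Armstrong's thesis, and the proof there proceeds exactly as you do --- a bijection between minimal factorizations $(w_0;w_1,\ldots,w_m)$ and $m$-element multichains in $NC(W)=[\varepsilon,c]$ under absolute order (via the partial products $\sigma_k$, with the length bookkeeping you indicate), followed by Chapoton's formula $Z(NC(W),m+1)=\prod_{i=1}^n(mh+d_i)/d_i$, which is indeed only known through the classification, type by type. So the only ``gap'' in your write-up, the case-by-case verification of Chapoton's formula, is shared by the source the paper cites and is not something you could avoid uniformly.
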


Since the numbers $h-d_i+2$ are a permutation of the degrees \cite[Lemma 3.16]{HumpAC}, we have an alternate formula for the positive Fu\ss--Catalan number:
\begin{equation*}
\Cat_+^{(m)}(W)=\prod _{i=1} ^{n}\frac{mh+d_i-2} {d_i}=(-1)^n\Cat^{(-m-1)}(W).
\end{equation*}
Our next result is Theorem~3.6.9(1) from \cite{ArmDAA}.
\begin{theorem} \label{thm:3}
The total number of {\em(}multi-{\em)}chains
\begin{equation*}
\pi_1\le\pi_2\le\dots\le\pi_{l}
\end{equation*}
in $NC^{(m)}(W)$ is equal to $\Cat^{(ml)}(W)$. 
\end{theorem}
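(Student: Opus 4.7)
The plan is to construct an explicit bijection between $l$-multichains in $NC^{(m)}(W)$ and elements of $NC^{(ml)}(W)$, after which Theorem~\ref{thm:2} applied with parameter $ml$ gives the desired count $\Cat^{(ml)}(W)$.

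Given a multichain $\pi_1 \le \pi_2 \le \cdots \le \pi_l$ with $\pi_j = (w_0^{(j)}; w_1^{(j)}, \ldots, w_m^{(j)})$, the defining order relation implies that, for each $i = 1, \ldots, m$, the sequence $w_i^{(1)}, w_i^{(2)}, \ldots, w_i^{(l)}$ is a descending chain in the absolute order: $w_i^{(l)} \le_T w_i^{(l-1)} \le_T \cdots \le_T w_i^{(1)}$. Set $v_{i,1} := w_i^{(l)}$ and $v_{i,j} := \bigl(w_i^{(l-j+2)}\bigr)^{-1}\! w_i^{(l-j+1)}$ for $j = 2, \ldots, l$, so that the telescoping identity $w_i^{(1)} = v_{i,1} v_{i,2} \cdots v_{i,l}$ holds length-additively. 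The forward map then concatenates these increments into
\[
\bigl(w_0^{(1)};\, v_{1,1}, \ldots, v_{1,l},\, v_{2,1}, \ldots, v_{2,l},\, \ldots,\, v_{m,1}, \ldots, v_{m,l}\bigr),
\]
which has $1+ml$ entries, multiplies to $c$, and has total length $\ell_T(c)$; hence it lies in $NC^{(ml)}(W)$.

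To invert, given $(x_0; x_1, \ldots, x_{ml})\in NC^{(ml)}(W)$, I would partition $\{1,\ldots,ml\}$ into $m$ consecutive blocks of size $l$, set $v_{i,j} := x_{(i-1)l+j}$, recover $w_i^{(j)} := v_{i,1} v_{i,2} \cdots v_{i,l-j+1}$ as a contiguous (hence length-additive) subproduct within each block, and finally define $w_0^{(j)} := c\cdot\bigl(w_1^{(j)} w_2^{(j)} \cdots w_m^{(j)}\bigr)^{-1}$.

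The main obstacle is to verify that each reconstructed tuple $\pi_j$ is really a valid element of $NC^{(m)}(W)$: equivalently, that the factorization $w_0^{(j)} w_1^{(j)} \cdots w_m^{(j)} = c$ is length-additive. For $j = 1$ this is immediate, but for $j \ge 2$ the subproduct $w_1^{(j)} w_2^{(j)} \cdots w_m^{(j)}$ is a \emph{non-contiguous} subproduct of the minimal factorization $x_0 x_1 \cdots x_{ml}$, obtained by retaining only the first $l - j + 1$ entries of each of the $m$ blocks. The required length-additivity (and the consequent fact that this subproduct lies weakly below $c$ in the absolute order) rests on a structural property of minimal $T$-factorizations of a Coxeter element: any in-order subproduct of the factors again lies in $NC(W)$ and is itself length-additive---a consequence of the lattice property of $NC(W)$ (cf.\ \cite{BesDAB}). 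Once this is settled, compatibility of the inverse construction with the multichain order follows routinely from the definitions, completing the bijection.
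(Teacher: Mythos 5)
The paper itself offers no proof of Theorem~\ref{thm:3}: it is quoted from \cite[Theorem~3.6.9(1)]{ArmDAA}, and the route implicit there --- and alluded to in the proof of Lemma~\ref{lem:1} --- is precisely the one you take: a bijection between $l$-element multichains in $NC^{(m)}(W)$ and elements of $NC^{(ml)}(W)$, followed by Theorem~\ref{thm:2} with $m$ replaced by $ml$. Your forward map (record the top elements $w_i^{(l)}$ of each coordinate chain together with the successive quotients down the chain) and your inverse (prefix products within each block of $l$ entries, with $w_0^{(j)}$ forced by the product condition) are the standard ones, and your verification that the forward map lands in $NC^{(ml)}(W)$ is correct.

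The one point at which the write-up is not yet a proof is exactly the step you flag as the main obstacle: that an in-order but non-contiguous subproduct of a length-additive $T$-factorization of $c$ is again length-additive and combines with a complement to give a minimal factorization of $c$. This fact is true, but it is not a consequence of the lattice property of $NC(W)$, and invoking \cite{BesDAB} in this form leaves a gap. The elementary justification uses only that $\ell_T$ is subadditive and constant on conjugacy classes: whenever an omitted factor $x_a$ stands immediately to the right of a retained factor $x_r$, rewrite $x_r x_a=(x_r x_a x_r^{-1})x_r$; iterating, all omitted factors (including $x_0$) migrate to the far left, conjugated and hence of unchanged length, so that $c=y_1\cdots y_{ml+1-k}\,x_{i_1}\cdots x_{i_k}$, where $x_{i_1},\dots,x_{i_k}$ are the retained entries in their original order and the total of all lengths is still $\ell_T(c)$. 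Subadditivity then forces every grouping of this product to be length-additive; grouping as $\bigl(y_1\cdots y_{ml+1-k}\bigr)\bigl(w_1^{(j)}\bigr)\cdots\bigl(w_m^{(j)}\bigr)$ shows that $w_0^{(j)}=c\bigl(w_1^{(j)}\cdots w_m^{(j)}\bigr)^{-1}=y_1\cdots y_{ml+1-k}$ and that $w_0^{(j)}w_1^{(j)}\cdots w_m^{(j)}=c$ is length-additive, i.e.\ $\pi_j\in NC^{(m)}(W)$. With this lemma stated and proved rather than asserted, your remaining claims (the reconstructed $\pi_j$ form a multichain, and the two maps are mutually inverse) are indeed routine, and the argument is complete.
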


And, moreover, we have the following.

\begin{lemma} \label{lem:1}
The number of {\em(}multi-{\em)}chains $\pi_1\le\pi_2\le\dots\le\pi_{l}$ in $NC^{(m)}(W)$ with $\rk(\pi_1)=0$ is equal to $\Cat^{(ml-1)}(W)$.
\end{lemma}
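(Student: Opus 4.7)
The plan is to establish a bijection between multichains $\pi_1\le\cdots\le\pi_l$ in $NC^{(m)}(W)$ with $\rk(\pi_1)=0$ and elements of $NC^{(ml-1)}(W)$, whose cardinality is $\Cat^{(ml-1)}(W)$ by Theorem~\ref{thm:2}. This will be a restriction of the bijection underpinning Theorem~\ref{thm:3}, together with the observation that the condition $\rk(\pi_1)=0$ corresponds to one designated part of the output factorization being the identity.

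Concretely, writing $\pi_j=(w_0^{(j)};w_1^{(j)},\ldots,w_m^{(j)})$, the definition~\eqref{eq:GNCord} of the partial order forces $w_i^{(j+1)}\le_T w_i^{(j)}$ in the absolute order for each $i\ge 1$, so each $w_i^{(1)}$ admits the length-additive factorization
\begin{equation*}
w_i^{(1)}=w_i^{(l)}\cdot\bigl(w_i^{(l)}\bigr)^{-1}w_i^{(l-1)}\cdots\bigl(w_i^{(2)}\bigr)^{-1}w_i^{(1)}
\end{equation*}
into $l$ pieces. Substituting these expansions into $c=w_0^{(1)}w_1^{(1)}\cdots w_m^{(1)}$ yields a length-additive factorization of $c$ into $1+ml$ parts --- that is, an element of $NC^{(ml)}(W)$ --- whose leading part is $w_0^{(1)}$ and whose remaining $ml$ parts are arranged in $m$ consecutive blocks of size $l$. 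The inverse recovers $w_i^{(j)}$ as the partial product of the first $l-j+1$ elements of the $i$-th block and determines $w_0^{(j)}$ via $w_0^{(j)}w_1^{(j)}\cdots w_m^{(j)}=c$; that this genuinely yields a valid multichain in $NC^{(m)}(W)$ uses the lattice structure of the absolute-order interval below $c$, which is a standard ingredient of the proof of Theorem~\ref{thm:3}.

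Restricting to multichains with $\rk(\pi_1)=0$, i.e.\ $w_0^{(1)}=\varepsilon$, corresponds exactly to restricting to elements of $NC^{(ml)}(W)$ whose leading part is the identity, and simply deleting this trivial leading part sets up a bijection with $NC^{(ml-1)}(W)$. Applying Theorem~\ref{thm:2} then completes the proof. The only point requiring genuine verification is the feature of the bijection underpinning Theorem~\ref{thm:3} that $w_0^{(1)}$ appears unmodified as a distinguished leading coordinate of the output; this is transparent from the explicit construction sketched above, so no real obstacle is anticipated.
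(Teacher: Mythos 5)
Your proposal is correct and follows essentially the same route as the paper: both reduce the count to the bijection (from Armstrong's Theorem~3.6.7 together with the fundamental map) between multichains $\pi_1\le\cdots\le\pi_l$ in $NC^{(m)}(W)$ and elements $(u_0;u_1,\ldots,u_{ml})$ of $NC^{(ml)}(W)$ with $u_0=w_0^{(1)}$, observe that $\rk(\pi_1)=0$ means $u_0=\varepsilon$, and then invoke the second statement of Theorem~\ref{thm:2} to count the resulting minimal factorizations of $c$ into $ml$ parts. The only difference is cosmetic: you sketch the bijection explicitly, whereas the paper simply cites it from \cite{ArmDAA}.
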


\begin{proof}
If $\pi_1=(w_0^{(1)};w_1^{(1)},\ldots,w_m^{(1)})$ then the condition $\rk(\pi_1)=0$ is equivalent to $w_0^{(1)}=\varepsilon$. We note that Theorem~3.6.7 of \cite{ArmDAA}, together with the fundamental map between multichains and minimal factorizations \cite[Definition~3.2.3]{ArmDAA}, establishes a bijection between multichains $\pi_1\leq\cdots \leq\pi_l$ in $NC^{(m)}(W)$ and elements $(u_0;u_1,\ldots,u_{ml})$ of $NC^{(ml)}(W)$ for which $u_0=w_0^{(1)}$. Since $\ell_T(\varepsilon)=0$, we wish to count factorizations $u_1u_2\cdots u_{ml}=c$ in which $\ell_T(u_1)+\cdots +\ell_T(u_{ml})=\ell_T(c)$. By the second part of Theorem~\ref{thm:2}, this number is equal to $\Cat^{(ml-1)}(W)$, as desired.
\end{proof}

A stronger version of
rank-selected chain enumeration will be important in the proof of our main theorem in Section~\ref{sec:3}. Given a finite reflection group $W$ of rank $n$, let $R_W(s_1,s_2,\dots,s_l)$ denote the number of (multi-)chains
\begin{equation*}
\pi_1\le\pi_2\le\dots\le\pi_{l-1}
\end{equation*}
in $NC^{(m)}(W)$, such that $\rk(\pi_i)=s_1+s_2+\dots+s_i$, $i=1,2,\dots,l-1$, and $s_1+s_2+\dots+s_l=n$. The following lemma says that zeroes in the argument of $R_W(.)$ can be suppressed except for a zero in the first argument.

\begin{lemma} \label{lem:2}
Let $W$ be a finite real reflection group of rank $n$ and let $s_1,s_2,\dots,s_l$ be non-negative integers with $s_1+s_2+\dots+s_l=n$. Then
\begin{equation} \label{eq:RW0} 
R_W(s_1,\dots,s_i,0,s_{i+1},\dots,s_l)=
R_W(s_1,\dots,s_i,s_{i+1},\dots,s_l)
\end{equation}
for $i=1,2,\dots,l$. If $i=l$, equation \eqref{eq:RW0} must be interpreted as 
\begin{equation*}
R_W(s_1,\dots,s_l,0)=R_W(s_1,\dots,s_l).
\end{equation*}
\end{lemma}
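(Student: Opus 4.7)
The plan is to prove both cases of \eqref{eq:RW0} by elementary bijections, resting on the single observation that two comparable elements of $NC^{(m)}(W)$ having the same rank must coincide. Indeed, if $\pi=(w_0;w_1,\dots,w_m)\le \pi'=(u_0;u_1,\dots,u_m)$ satisfy $\ell_T(w_0)=\ell_T(u_0)$, then the identity $\ell_T(w_0)+\ell_T(w_0^{-1}u_0)=\ell_T(u_0)$ noted right after \eqref{eq:GNCord} forces $w_0=u_0$; combined with the total-length constraint $\sum_{i=0}^m\ell_T(w_i)=\sum_{i=0}^m\ell_T(u_i)=\ell_T(c)$ and the inequality $\ell_T(u_i)\le\ell_T(w_i)$ for $i\ge 1$ implied by \eqref{eq:GNCord}, this forces $\ell_T(u_i^{-1}w_i)=0$, hence $u_i=w_i$, for every $i\ge 1$, so $\pi=\pi'$.

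For the interior case $1\le i\le l-1$, I would read the left-hand side of \eqref{eq:RW0} as enumerating multichains $\pi_1\le\pi_2\le\dots\le\pi_{l}$ (with $l$ chain elements, since $R_W$ with $l+1$ arguments has one extra element compared to the $l$-argument version) whose successive ranks are dictated by the jumps $s_1,\dots,s_i,0,s_{i+1},\dots,s_l$. The inserted zero yields $\rk(\pi_{i+1})=\rk(\pi_i)$, so by the preceding observation $\pi_{i+1}=\pi_i$; deleting the repeated entry gives a multichain of length $l-1$ with rank jumps $s_1,\dots,s_l$, which is precisely what the right-hand side counts. Conversely, each multichain enumerated by the right-hand side is the image of exactly one multichain on the left, namely the one obtained by duplicating its $i$-th entry at position $i+1$, so the two operations are mutually inverse.

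For the boundary case $i=l$, the appended zero forces $\rk(\pi_l)=s_1+\dots+s_l=n$, so $\pi_l$ must coincide with the unique maximum $(c;\varepsilon,\dots,\varepsilon)$; deleting this mandatory top element, and conversely appending the maximum to any shorter multichain, furnishes the required bijection. I do not foresee any genuine obstacle, since everything reduces to the graded-poset fact established in the first paragraph. The reason the lemma explicitly excludes a zero in the \emph{first} slot is precisely that $\rk(\pi_1)=0$ does not pin $\pi_1$ down to a single element (the poset has no unique minimum), so the analogous collapsing argument would fail; that case is instead handled separately by Lemma~\ref{lem:1}.
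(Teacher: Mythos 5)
Your proof is correct and follows essentially the same route as the paper: the paper dismisses the case $i<l$ as obvious (your delete/duplicate bijection, justified by the fact that comparable elements of equal rank coincide in this graded poset, is exactly the spelled-out version of that), and your treatment of $i=l$ via the unique maximal element $(c;\varepsilon,\dots,\varepsilon)$ is the paper's argument verbatim. Your closing remark about why a zero in the first slot cannot be suppressed also matches the paper's reason for treating that case separately in Lemma~\ref{lem:1}.
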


\begin{proof}
This is obvious as long as $i<l$. If $i=l$, then, by definition,  $R_W(s_1,\dots,s_l,0)$ counts all multi-chains $\pi_1\le\pi_2\le\dots\le\pi_l$ with $\rk(\pi_i)=s_1+s_2+\dots+s_i$, $i=1,2,\dots,l$. In particular, $\rk(\pi_l)=s_1+s_2+\dots+s_l=n$, so that $\pi_l$ must be the unique maximal element $(c;\ep,\dots,\ep)$ of $NC^{(m)}(W)$. Thus we are counting multi-chains $\pi_1\le\pi_2\le\dots\le\pi_{l-1}$ with $\rk(\pi_i)=s_1+s_2+\dots+s_i$, $i=1,2,\dots,l-1$, and, again by definition, this number is given by $R_W(s_1,\dots,s_l)$.
\end{proof}

Finally we quote the version of inclusion-exclusion given in  \cite[Sec.~2.1, Eq.~(4)]{StanAP} that will be relevant to us.

\begin{proposition} \label{prop:1}
Let $A$ be a finite set and $w:A\to\mathbb C$ a weight function on $A$. Furthermore, let $S$ be a set of properties an element of $A$ may or may not have. Given a subset $Y$ of $S$, we define the functions $f_=(Y)$ and $f_\ge(Y)$ by
\begin{equation*}
f_=(Y):={\sum _{a} \,}^{\displaystyle\prime}w(a),
\end{equation*}
where $\sum{}^{\textstyle\prime}$ is taken over all $a\in A$ which have {\sf exactly} the properties $Y$, and by 
\begin{equation*}
f_\ge(Y)=\sum _{X\supseteq Y} ^{}F_=(X).
\end{equation*}
Then
\begin{equation} \label{eq:inclexcl} 
f_=(\emptyset)=
\sum _{Y\subseteq S} ^{}(-1)^{\vert Y\vert}f_\ge(Y).
\end{equation}
\end{proposition}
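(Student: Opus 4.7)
The plan is the standard proof of inclusion–exclusion via Möbius inversion on the Boolean lattice: substitute the definition of $f_\ge$ into the right-hand side of \eqref{eq:inclexcl}, interchange the (finite) order of summation, and recognise the resulting inner sum as $(1-1)^{\vv X}$, so that only the term $X=\emptyset$ contributes.

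Concretely, using $f_\ge(Y)=\sum_{X\supseteq Y}f_=(X)$, the right-hand side of \eqref{eq:inclexcl} becomes
\[
\sum_{Y\subseteq S}(-1)^{\vv Y}\sum_{X\supseteq Y}f_=(X)
=\sum_{X\subseteq S}f_=(X)\sum_{Y\subseteq X}(-1)^{\vv Y},
\]
where the interchange of summation is legitimate because $S$, and hence its power set, is finite, and every term is a finite complex number. Grouping the inner sum by the cardinality of $Y$ yields
\[
\sum_{Y\subseteq X}(-1)^{\vv Y}=\sum_{k=0}^{\vv X}\binom{\vv X}{k}(-1)^k=(1-1)^{\vv X},
\]
which equals $1$ when $X=\emptyset$ and $0$ otherwise. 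Therefore only the term $X=\emptyset$ survives in the outer sum, producing $f_=(\emptyset)$, as required.

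There is essentially no obstacle. The proof is routine, and the weight function $w$ plays no role in the argument: by linearity in $w$, the proposition reduces to the case where $A=\{a\}$ consists of a single element of weight $1$ possessing exactly the collection of properties $X_0\subseteq S$, in which case $f_=(Y)=\delta_{Y,X_0}$ and $f_\ge(Y)$ is $1$ or $0$ according to whether $Y\subseteq X_0$, so that \eqref{eq:inclexcl} collapses to the binomial identity above applied with $X=X_0$. If one preferred a more conceptual phrasing, one could instead observe that the maps $Y\mapsto f_=(Y)$ and $Y\mapsto f_\ge(Y)$ are related by a unitriangular matrix over $2^S$, and Möbius inversion on the Boolean lattice (whose Möbius function is $\mu(\emptyset,Y)=(-1)^{\vv Y}$) yields \eqref{eq:inclexcl} at once.
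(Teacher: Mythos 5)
Your proof is correct: the paper itself offers no proof of this proposition, simply quoting it from Stanley's \emph{Enumerative Combinatorics} (Sec.~2.1, Eq.~(4)), and your argument --- substituting the definition of $f_\ge$, interchanging the finite sums, and collapsing the inner sum via $(1-1)^{\vv X}$ --- is precisely the standard derivation given there. Note also that you correctly read the statement's $F_=(X)$ as the typo it is for $f_=(X)$.
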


\section{Proof of Main Theorem}
\label{sec:3}
Let $\bf{c}$ denote the unique maximum element $(c;\varepsilon,\ldots,\varepsilon)$ of $NC^{(m)}(W)$ and let {\sf mins} denote its set of minimal elements, the cardinality of which is $\Cat^{(m-1)}(W)$. The truncated poset
\begin{equation*}
NC^{(m)}(W)\backslash\big(\{{\bf c}\}\cup \sf mins\big)
\end{equation*}
is a rank-selected subposet of $NC^{(m)}(W)$, the latter being shellable due to \cite[Cor.~3.7.3]{ArmDAA}. If we combine this observation with the fact (see \cite[Theorem~4.1]{BjoeAA}) that rank-selected subposets of shellable posets are also shellable, we conclude that  $NC^{(m)}(W)\backslash\big(\{{\bf c}\}\cup \sf mins\big)$ is shellable. Since it is known  that a pure $d$-dimensional shellable simplicial complex $\Delta$ is homotopy equivalent to a wedge of $\tilde\chi(\Delta)$  $d$-dimensional spheres  (this follows from Fact~9.19 in \cite{BjoeAB} and the fact that shellability implies the property of being homotopy-Cohen-Macaulay \cite[Sections~11.2, 11.5]{BjoeAB}),  it remains only to compute the reduced Euler characteristics $\tilde\chi(.)$ of (the order complex of) $NC^{(m)}(W)\backslash\big(\{c\}\cup \sf mins\big)$.

For a finite real reflection group $W$ of rank $n$, let us again write $R_W(s_1,s_2,\dots,s_l)$ for the number of (multi-)chains
\begin{equation*}
\pi_1\le\pi_2\le\dots\le\pi_{l-1}
\end{equation*}
in $NC^{(m)}(W)$ with $\rk(\pi_i)=s_1+s_2+\dots+s_i$, $i=1,2,\dots,l-1$, and $s_1+s_2+\dots+s_l=n$. By definition, the reduced Euler characteristic is
\begin{equation} \label{eq:1}
-1+\sum _{l=2} ^{n}(-1)^l
\underset{s_1,\dots,s_l>0}{\sum _{s_1+\dots+s_l=n} ^{}}
R_W(s_1,s_2,\dots,s_l).
\end{equation}
The sum over $s_1,s_2,\dots,s_l$ in \eqref{eq:1} could be easily calculated from Theorem~\ref{thm:3}, if there were not the restriction $s_1,s_2,\dots,s_l>0$. In order to overcome this difficulty, we appeal to the principle of inclusion-exclusion. More precisely, for a fixed $l$, in Proposition~\ref{prop:1} choose $A=\{(s_1,s_2,\dots,s_l):s_1+s_2+\dots+s_l=n\}$, $w\big((s_1,s_2,\dots,s_l)\big)=R_W(s_1,s_2,\dots,s_l)$,
and $S=\{S_i:i=1,2,\dots,l\}$, where $S_i$ is the property of an element
$(s_1,s_2,\dots,s_l)\in A$ to satisfy $s_i=0$. Then \eqref{eq:inclexcl} becomes
\begin{equation*}
 \underset{s_1,\dots,s_l>0}{\sum _{s_1+\dots+s_l=n} ^{}}
R_W(s_1,s_2,\dots,s_l)=
\sum _{I\subseteq\{1,\dots,l\}} ^{}(-1)^{\vert I\vert}
\underset{s_i=0\text{ for }i\in I}
{\underset{s_1,\dots,s_l\ge0}{\sum _{s_1+\dots+s_l=n} ^{}}}
R_W(s_1,s_2,\dots,s_l).
\end{equation*}
In view of Lemma~\ref{lem:2}, the right-hand side may be simplified, so that we obtain the equation
\begin{align*}
 \underset{s_1,\dots,s_l>0}{\sum _{s_1+\dots+s_l=n} ^{}}
R_W(s_1,s_2,\dots,s_l)&=
\underset{1\in I}{\sum _{I\subseteq\{1,\dots,l\}} ^{}}(-1)^{\vert I\vert}
{\underset{s_2,\dots,s_{l-\vert I\vert+1}\ge0}
{\sum _{s_2+\dots+s_{l-\vert I\vert+1}=n} ^{}}}
R_W(0,s_2,\dots,s_{l-\vert I\vert+1})\\
&\kern1cm
+\underset{1\notin I}{\sum _{I\subseteq\{1,\dots,l\}} ^{}}(-1)^{\vert I\vert}
{\underset{s_1,\dots,s_{l-\vert I\vert}\ge0}
{\sum _{s_1+\dots+s_{l-\vert I\vert}=n} ^{}}}
R_W(s_1,s_2,\dots,s_{l-\vert I\vert})\\
&=
\sum _{j=1} ^{l}(-1)^{j}\binom {l-1}{j-1}
{\underset{s_2,\dots,s_{l-j+1}\ge0}
{\sum _{s_2+\dots+s_{l-j+1}=n} ^{}}}
R_W(0,s_2,\dots,s_{l-j+1})\\
&\kern1cm
+\sum _{j=0} ^{l}(-1)^{j}\binom {l-1}j
{\underset{s_1,\dots,s_{l-j}\ge0}
{\sum _{s_1+\dots+s_{l-j}=n} ^{}}}
R_W(s_1,s_2,\dots,s_{l-j}).
\end{align*}
By Lemma~\ref{lem:1}, the sum over $s_2,\dots,s_{l-j+1}$ on the right-hand side is equal to\break 
$\Cat^{((l-j)m-1)}(W)$, while by Theorem~\ref{thm:3} the sum over $s_1,\dots,s_{l-j}$  is equal to\break 
$\Cat^{((l-j-1)m)}(W)$. If we substitute all this in \eqref{eq:1}, we arrive at the expression
\begin{multline} \label{eq:2}
-1+\sum _{l=2} ^{n}(-1)^l
\Bigg(
\sum _{j=1} ^{l}(-1)^j\binom {l-1}{j-1}
\Cat^{((l-j)m-1)}(W)\\
+\sum _{j=0} ^{l-1}(-1)^j\binom {l-1}{j}
\Cat^{((l-j-1)m)}(W)
\Bigg)
\end{multline}
for the reduced Euler characteristics that we want to compute. We now perform the replacement $l=j+k$ in both sums. Thereby we obtain the expression
\begin{multline} \label{eq:3}
-1-\Cat^{(-1)}(W)-\Cat^{(-m)}(W)+\Cat^{(0)}(W)\\
+\sum _{k=0} ^{n}(-1)^k
\Bigg(
\sum _{j=1} ^{n-k}\binom {j+k-1}{j-1}
\Cat^{(km-1)}(W)
+\sum _{j=0} ^{n-k}\binom {j+k-1}{j}
\Cat^{((k-1)m)}(W)
\Bigg),
\end{multline}
the various terms in the first line being correction terms that cancel terms in the sums in the second line violating the condition $l=j+k\ge2$, which is present in \eqref{eq:2}. Since we shall make use of it below, the reader should observe that, by the definition \eqref{eq:FC} of Fu\ss--Catalan numbers, both $\Cat^{(km-1)}(W)$ and $\Cat^{((k-1)m)}(W)$ are polynomials in $k$ of degree $n$ with leading coefficient $(mh)^n$.

Again by \eqref{eq:FC}, we have $\Cat ^{(-1)}(W)=0$ and $\Cat ^{(0)}(W)=1$. Therefore, if we evaluate the sums over $j$ in \eqref{eq:3} (this is a special instance of the Chu--Vandermonde summation), then we obtain the expression
\begin{align*}
-\Cat&^{(-m)}(W)+\sum _{k=0} ^{n}(-1)^k
\Bigg(\binom {n}{k+1}\Cat^{(km-1)}(W)
+\binom {n}{k}\Cat^{((k-1)m)}(W)
\Bigg)\\
&=-\Cat^{(-m)}(W)+\Cat^{(-m-1)}(W)\\
&\kern2cm
-\sum _{k=0} ^{n}(-1)^k
\binom {n}{k}\Cat^{((k-1)m-1)}(W)
+\sum _{k=0} ^{n}(-1)^k \binom {n}{k}\Cat^{((k-1)m)}(W)\\
&=-\Cat^{(-m)}(W)+\Cat^{(-m-1)}(W)-(-1)^nn!(mh)^n+(-1)^nn!(mh)^n\\
&=-\Cat^{(-m)}(W)+\Cat^{(-m-1)}(W)\\
&=-(-1)^n\Cat_+^{(m-1)}(W)+(-1)^n\Cat_+^{(m)}(W),
\end{align*}
where, to go from the second to the third line, we used the well-known fact from finite difference calculus (cf. \cite[Sec.~1.4, Eq.~(26) and Prop.~1.4.2]{StanAP}), that, for any polynomial $p(k)$ in $k$ of degree $n$ and leading coefficient $p_n$, we have
\begin{equation*}
\sum _{k=0} ^{n}(-1)^k\binom nkp(k)=(-1)^nn!p_n.
\end{equation*}

\section{The case of well-generated complex reflection groups} 
\label{sec:complex}

We conclude the paper by pointing out that our result in
Theorem~\ref{thm:1} on the Euler characteristic of the truncated poset
of generalized noncrossing partitions extends naturally to {\sf well-generated 
complex reflection groups}. We refer the reader to \cite{ShToAA,SpriAA} for all
terminology related to complex reflection groups.

Let $W$ be a finite group generated by (complex) reflections in
$\mathbb C^n$, and let $T\subseteq W$ denote the set of {\sf all}
reflections in the group. (Here, a reflection is a non-trivial element of
$GL(\mathbb C^n)$ which fixes a hyperplane pointwise and which has finite 
order.) As in Section~\ref{sec:1}, let
$\ell_T:W\to\mathbb{Z}$ denote the word length in terms of the
generators $T$. Now fix a regular element $c\in W$ in the sense of
Springer \cite{SpriAA} and a positive integer $m$. (If $W$ is a {\sf
real\/} reflection group, that is, if all generators in $T$ have order
$2$, then the notion of ``regular element" reduces to that of a
``Coxeter element.") 
As in the case of Coxeter elements, it can be shown that any two regular 
elements are conjugate to each other.
A further assumption that we need is that $W$ is {\sf well-generated},
that is, that it is generated by $n$ reflections given that $n$ is minimal
such that $W$ can be realized as reflection group on $\mathbb C^n$.

Given these extended definitions of $\ell_T$ and
$c$, we define the set of 
{\sf $m$-divisible noncrossing partitions} by \eqref{eq:GNC},
and its partial order by \eqref{eq:GNCord}, as before.
In the extension of Theorem~\ref{thm:1} to well-generated 
complex reflection groups,
we need the {\sf Fu\ss--Catalan number} for $W$, which is again
defined by \eqref{eq:FC}, where the $d_i$'s are the {\sf degrees} of 
(homogeneous polynomial generators of the invariants of) $W$, 
and where $h$ is the largest of the degrees.

\begin{theorem}\label{thm:4}
Let $W$ be a finite well-generated (complex) reflection group of rank $n$ and let $m$ be a positive integer. The order complex of the poset $NC^{(m)}(W)$ with maximal and minimal elements deleted has reduced Euler characteristic
\begin{equation} 
\Cat^{(-m-1)}(W)-\Cat^{(-m)}(W).
\end{equation}
\end{theorem}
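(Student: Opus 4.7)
The plan is to re-run the computation of Section~\ref{sec:3} essentially verbatim, stopping one line before its final step so that the answer remains in the form $\Cat^{(-m-1)}(W)-\Cat^{(-m)}(W)$ rather than being rewritten in positive Fuß--Catalan notation. That rewriting, valid in the real case, uses the identity $\{h-d_i+2\}=\{d_i\}$ as multisets, which may fail for well-generated complex $W$. What the Section~\ref{sec:3} argument really uses is: (i)~the Fuß--Catalan cardinality formula \eqref{eq:FC} for $NC^{(m)}(W)$ (Theorem~\ref{thm:2}); (ii)~the multichain count of Theorem~\ref{thm:3}; (iii)~Lemma~\ref{lem:1}; (iv)~the zero-suppression Lemma~\ref{lem:2}; (v)~the inclusion--exclusion of Proposition~\ref{prop:1}; and (vi)~the fact that $\Cat^{(km-1)}(W)$ and $\Cat^{((k-1)m)}(W)$ are polynomials in $k$ of degree~$n$ with the same leading coefficient, so that the two $(-1)^n n!(mh)^n$ terms produced by Chu--Vandermonde cancel. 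Items (v) and (vi) are formal, so I would need to verify (i)--(iv) in the well-generated complex setting.

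Item~(iv) is purely combinatorial; its proof transfers word for word, requiring only the existence of a unique maximum element $(c;\ep,\dots,\ep)$ of rank $n$, which follows from \eqref{eq:GNC}--\eqref{eq:rkm} exactly as in the real case. Items~(i) and~(ii) are the substantive inputs, and here I would appeal to the Bessis--Corran extension of noncrossing partitions to well-generated complex reflection groups \cite{BesDAB,BeCoAA}: the cardinality formula \eqref{eq:FC} and the bijection between multichains in $NC^{(m)}(W)$ and elements of $NC^{(ml)}(W)$ (underlying Theorem~\ref{thm:3} and already used in \cite[Theorem~3.6.7]{ArmDAA}) extend to the complex setting by the same ``$W$-noncrossing'' translation indicated in \cite[Disclaimer~1.3.1]{ArmDAA}. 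Item~(iii) then follows from (i) and (ii) by the exact argument used for Lemma~\ref{lem:1}: a multichain with $\rk(\pi_1)=0$ corresponds to a tuple $(u_0;u_1,\dots,u_{ml})\in NC^{(ml)}(W)$ with $u_0=\ep$, hence to a minimal factorization $u_1u_2\cdots u_{ml}=c$, counted by $\Cat^{(ml-1)}(W)$.

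Granting (i)--(iv), the inclusion--exclusion/Chu--Vandermonde manipulation of Section~\ref{sec:3} proceeds symbol for symbol and delivers the reduced Euler characteristic $\Cat^{(-m-1)}(W)-\Cat^{(-m)}(W)$. The main obstacle is therefore the bookkeeping in the previous paragraph: confirming that (i), (ii), and in particular the multichain-to-factorization bijection are either already established in the Bessis--Corran literature or require only a routine restatement of the real-case arguments in the dual-braid-monoid language of \cite{BesDAB}. I would \emph{not} attempt to extend the homotopy-type conclusion of Theorem~\ref{thm:1}, since shellability of $NC^{(m)}(W)$ for well-generated complex $W$ is not currently in the literature, as noted at the end of Section~\ref{sec:1}.
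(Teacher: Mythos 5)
Your proposal follows essentially the same route as the paper: both transfer the Section~\ref{sec:3} computation verbatim to the well-generated complex setting, using Bessis' extension of Theorem~\ref{thm:2} (Proposition~13.1 of \cite{BesDAB}) and the verbatim carry-over of the multichain results underlying Theorem~\ref{thm:3} and Lemma~\ref{lem:1}, and both stop with the answer in the form $\Cat^{(-m-1)}(W)-\Cat^{(-m)}(W)$, leaving the homotopy statement aside. Your reasoning and caveats match the paper's proof of Theorem~\ref{thm:4}.
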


In order to prove this theorem, we may use the proof of
Theorem~\ref{thm:1} given in Sections~\ref{sec:2} and \ref{sec:3}
essentially verbatim. The only difference is that all notions (such as
the reflections $T$ or the order $\ell_T$, for example), 
have to be interpreted in the extended sense
explained above, and that ``Coxeter element" has to be replaced by
``regular element" everywhere. In particular, the extension of
Theorem~\ref{thm:2} to well-generated complex reflection groups is 
Proposition~13.1 in \cite{BesDAB}, and the proofs of
Theorems~3.6.7 and Theorems~3.6.9(1) in \cite{ArmDAA} 
(which we used in order to establish Lemma~\ref{lem:1} respectively
Theorem~\ref{thm:3}) carry over essentially verbatim to the case of
well-generated complex reflection groups.


\begin{thebibliography} {1}



\bibitem{ArmDAA} D.~Armstrong, {\em Generalized noncrossing partitions and combinatorics of Coxeter groups}, Ph.D. thesis, Cornell University, 2006; to appear in Mem.\ Amer.\ Math.\ Soc.; {\tt ar$\chi$iv:math/0611106}.

\bibitem{AtBWAA} C. A. Athanasiadis, T. Brady and C. Watt, {\em
Shellability of noncrossing partition lattices}, Proc.\ Amer.\ Math.\
Soc.\ {\bf 135} (2007), 939--949.

\bibitem{BesDAB} D.    Bessis, {\em Finite complex reflection groups
are $K(\pi,1)$}, preprint,  {\tt ar$\chi$iv:math/0610777}.

\bibitem{BeCoAA}
D. Bessis and R. Corran, {\em Non-crossing partitions of type
$(e,e,r)$},  Adv.\ Math.\ {\bf202} (2006), 1--49. 

\bibitem{BjoeAA} A.~Bj\"orner, {\em Shellable and Cohen--Macaulay partially ordered sets}, Trans.\ Amer.\ Math.\ Soc.\ {\bf 260} (1980), 159--193.

\bibitem{BjoeAB} A.~Bj\"orner, {\em Topological methods}, Ch.~34 in Handbook of Combinatorics, Vol.~2, Elsevier, Amsterdam, 1995, pp.~1819--1872. 

\bibitem{Ede} P.~H.~Edelman, {\em Chain enumeration and non-crossing partitions}, Discrete Math. {\bf 31} (1980), 171--180.

\bibitem{HumpAC} J.~E.~Humphreys, {\em  Reflection groups and Coxeter groups}, Cambridge University Press, Cambridge, 1990. 

\bibitem{ShToAA} 
G. C. Shephard and J. A. Todd, {\em Finite unitary reflection
groups}, Canad. J. Math. {\bf 6} (1954), 274--304.

\bibitem{SpriAA}
T. A. Springer, {\em Regular elements of finite reflection groups},
Invent. Math. {\bf 25} (1974), 159--198.

\bibitem{StanAP} R.~P.~Stanley, {\em  Enumerative Combinatorics}, Vol.~1, Wadsworth \& Brooks/Cole, Pacific Grove, California, 1986; reprinted by Cambridge University Press, Cambridge, 1998. 

\end{thebibliography}
\end{document}